\newtheorem{theorem}{Theorem}
\newtheorem{lemma}[theorem]{Lemma}
\newtheorem{corollary}[theorem]{Corollary}
\newcommand{\R}{\mathbb{R}}
\DeclareMathOperator{\Op}{Op}
\newcommand{\m}{{\bf m}}
\DeclareMathOperator{\Tr}{Tr}
\newcommand{\C}{\mathbb{C}}
\title[Perturbation of resonances]{Perturbation of Ruelle Resonances and Faure Sj\"ostrand anisotropic space}
\author{Yannick Guedes Bonthonneau}
\address{Universit\'e de Rennes 1, CNRS, IRMAR - UMR 6625, F-35000 Rennes, France}
\begin{document}

\begin{abstract}
Given an Anosov vector field $X_0$, all sufficiently close vector fields also are of Anosov type. In this note, we check that the anistropic spaces described in \cite{Faure-Sjostrand-10}, \cite{Dyatlov-Zworski-16} can be chosen adapted to any smooth vector field sufficiently close to $X_0$ in $C^1$ norm.
\end{abstract}

\maketitle 

In this note, $M$ will denote a compact manifold of dimension $n$, and $X_0$ a smooth \emph{Anosov} vector field on $M$. That is, there exists a splitting
\begin{equation*}\label{eq:invariant-splitting}
TM = \R X_0 \oplus E^u_0 \oplus E^s_0.
\end{equation*}
The vector field $X_0$ never vanishes. The splitting is invariant under the flow of $X_0$, which is denoted $\varphi^0_t$. We also have constants $C,\beta>0$ such that for all $t\geq 0$,
\begin{equation}\label{eq:def-beta-C}
\| d \varphi^0_{-t |E^u_0} \| \leq C e^{-\beta t},\quad\text{ and }\quad \| d\varphi^0_{t|E^s_0} \| \leq C e^{-\beta t}.
\end{equation}
(here the norm is a fixed norm a priori, and one can check that although the constants depends on that choice of norm, their existence does not).

Starting with \cite{BKL02}, several authors have built some \emph{anisotropic spaces} of distributions to study the spectral properties of hyperbolic dynamics, of which Anosov flows are a prime example. This enables the study of so-called Ruelle-Pollicott resonances, originally defined using the (quite different) techniques of Markov partitions \cite{Ruelle-86,Pollicott-85}. They appeared as the poles of some zeta functions, popularized by Smale \cite{Smale-67}.

Since there is no canonical way to build those spaces, various constructions have been developped. In \cite{Baladi-Quest-17}, one can find a thorough review of the litterature, weighting the different advantages that each construction has to offer. Ten years ago, such a construction was introduced in \cite{Faure-Roy-Sjostrand-08}. In its functional analytic aspects, it relied on microlocal analysis tools. Provided one understands standard tools in that field, one can present these spaces in the following fashion
\begin{equation}\label{eq:}
\mathcal{H}_G : = \Op(e^{-G}) L^2(M).
\end{equation}
(here $\Op$ denotes a classical quantization, and $G \in S^{\log}(T^\ast M)$). Using these technique led to new developments. For example the description of the correspondance between classical and quantum spectrum in constant curvature at the level of eigenfunctions. Several results already obtained have also been reproved using these techniques. For example the meromorphic continuation of the dynamical zeta function, proved in general by \cite{Giulietti-Liverani-Pollicott-13}, and in the smooth case with microlocal methods by \cite{Dyatlov-Zworski-16}. 

So far, one particular aspect of the theory that was not reproduced by the microlocal techniques are perturbations. In this note, we will explain how this can be done. The first step will be to prove:
\begin{theorem}\label{thm:same-space-main}
Let $X_0$ be a $C^\infty$ Anosov vector field on $M$ a compact manifold. There exists $\eta>0$ such that the following holds. For any $R >0$, we can find $G \in S^{\log}(T^\ast M)$ such that for $\| X - X_0 \|_{\mathcal{C}^1} < \eta$ and $X$ $C^\infty$, the spectrum of $X$ acting on $\mathcal{H}_G$ is discrete in 
\[
\{ s\in \C\ |\ \Re s > - R\}.
\]
\end{theorem}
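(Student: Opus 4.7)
The plan is to build a single Faure--Sj\"ostrand escape function $G$ for $X_0$, endowed with enough geometric slack in its conic profile, and then to verify that the associated escape estimate is stable under small $\mathcal{C}^1$ perturbations of the vector field. The standard microlocal Fredholm machinery of \cite{Faure-Sjostrand-10, Dyatlov-Zworski-16} will then yield discreteness of the spectrum of $X$ on the fixed space $\mathcal{H}_G$ in $\{\Re s > -R\}$.

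First, I would recall the construction of $G$ in the form $G(x,\xi) = m(x,\xi)\log\langle\xi\rangle$, where $m \in S^0(T^\ast M)$ is an order function that saturates to constants of opposite signs, of absolute value $N > R/\beta$, on disjoint conic neighbourhoods $V^{\ast,s}_0$ and $V^{\ast,u}_0$ of the dual bundles $E^{\ast,s}_0 := (\R X_0 \oplus E^u_0)^\perp$ and $E^{\ast,u}_0 := (\R X_0 \oplus E^s_0)^\perp$, and which interpolates monotonically along the $X_0$-cotangent flow in between. The hyperbolicity \eqref{eq:def-beta-C} then supplies, outside a compact set in $T^\ast M$, an escape bound $H_{X_0} G \leq -c\log\langle\xi\rangle$ for some $c>0$. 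This is exactly the ingredient that makes $X_0 - s$ Fredholm on $\mathcal{H}_G$ in $\Re s > -R$.

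Next, I would invoke structural stability: for $\|X - X_0\|_{\mathcal{C}^1} < \eta$ with $\eta$ small, $X$ is Anosov, with hyperbolicity constants close to those of $X_0$, and its stable/unstable bundles (and the duals $E^{\ast,s}_X, E^{\ast,u}_X$) are $\mathcal{C}^0$-close to those of $X_0$, uniformly on the $\mathcal{C}^1$-ball. By choosing $V^{\ast,s}_0$ and $V^{\ast,u}_0$ slightly wider than strictly necessary, I can arrange $E^{\ast,s}_X \subset V^{\ast,s}_0$ and $E^{\ast,u}_X \subset V^{\ast,u}_0$ for every such $X$. The Hamiltonian lift $H_X$ on $T^\ast M$ is, in local coordinates,
\[
H_X = X^j(x)\partial_{x^j} - \xi_k\, \partial_{x^j} X^k(x)\, \partial_{\xi_j},
\]
so $H_X$ depends $\mathcal{C}^0$-continuously on $X \in \mathcal{C}^1$, and one gets $|(H_X - H_{X_0})G| \leq C\eta \log\langle\xi\rangle$, with $C$ depending only on $G$.

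Combining these ingredients yields the uniform escape bound $H_X G \leq -(c/2)\log\langle\xi\rangle$ outside a (possibly enlarged) compact set. On the regions where $m$ has already saturated to $\pm N$, $H_X G$ reduces to $\pm N\cdot H_X\log\langle\xi\rangle$, and the uniform hyperbolicity of $X$ near $E^{\ast,s}_X$ and $E^{\ast,u}_X$, together with the inclusions $E^{\ast,\bullet}_X \subset V^{\ast,\bullet}_0$, produce the correct sign. On the transition region, one combines the $X_0$-escape with the perturbation bound, which is absorbed as soon as $\eta < c/C$. This uniform estimate then feeds directly into the standard positive-commutator / propagation arguments of \cite{Faure-Sjostrand-10, Dyatlov-Zworski-16}, giving discreteness of the spectrum in the prescribed half-plane. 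The main obstacle is precisely this last interlock: both structural stability and the microlocal Fredholm machinery are classical, but one has to verify that the $X_0$-escape estimate holds with a margin large enough to absorb simultaneously the $\mathcal{C}^1$ perturbation of $H_X$ and the $\mathcal{C}^0$ displacement of the dual bundles. Building enough slack into the conic profile of $m$ from the outset is the concrete mechanism that secures this uniformity.
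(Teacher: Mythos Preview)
Your outline matches the paper's strategy in spirit, but there is a genuine gap in the transition-region step, and it is precisely the point the paper works hardest to address.

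You correctly compute that $(H_X-H_{X_0})G$ contains a term $\bigl((H_X-H_{X_0})m\bigr)\log\langle\xi\rangle$, which is of size $C\eta\log\langle\xi\rangle$ since $(H_X-H_{X_0})m$ is a $0$-homogeneous function of size $O(\eta)$. To absorb this on the region where $m$ is non-constant, you would need $H_{X_0}m$ to be bounded away from zero there. But the standard construction --- averaging a cutoff $m_0$ along the flow, $m=\int_{-T}^{T}m_0\circ\Phi_t^{0,\infty}\,dt$ --- only gives $H_{X_0}m=m_0\circ\Phi_T^{0,\infty}-m_0\circ\Phi_{-T}^{0,\infty}\geq 0$, with no strict lower bound on the full set where $m$ is non-constant; indeed, since $m$ is smooth and eventually constant, $H_{X_0}m$ must approach $0$ near the edges of the transition region. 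So the $O(\eta\log\langle\xi\rangle)$ perturbation cannot be absorbed there, and your inequality $H_{X_0}G\leq -c\log\langle\xi\rangle$ outside a compact set is in fact false as stated (near the source and sink it is only $O(1)$). ``Building enough slack into the conic profile'' is exactly the issue, but you have not supplied a mechanism that actually produces it.

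The paper's device is to post-compose: set $\mathbf{m}=\chi(m-T)$ for a suitable cutoff $\chi$, and prove a ``crucial lemma'' showing that $\{|m-T|<\varepsilon\}\subset\{H_{X_0}m\geq\delta\}$ for some $\varepsilon,\delta>0$. Then $X^\infty\mathbf{m}=\chi'(m-T)\cdot X^\infty m$, and on $\operatorname{supp}\chi'$ one has $X_0^\infty m\geq\delta$, so the $O(\eta)$ perturbation of $X^\infty m$ is absorbed and $X^\infty\mathbf{m}\geq 0$ survives for all $X$ with $\|X-X_0\|_{C^1}$ small. This non-strict inequality is all that condition~(3) in the Dyatlov--Zworski scheme requires; the source/sink regions are then handled by radial estimates, whose persistence under perturbation is checked separately (the paper's Lemma on sources/sinks). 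The factorized form $\chi'\cdot X^\infty m$ is what makes the perturbation argument close, and it is the missing ingredient in your proposal.
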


This discrete spectrum corresponds to the poles of the Schwartz kernel of the resolvent of the flow, so it does not really depend on the choice of $G$; it is called the \emph{Ruelle-Pollicott} spectrum of resonances of $X$. Our next result is
\begin{corollary}\label{cor}
Consider $\lambda_1,\dots,\lambda_N$ a finite set of resonances (counted with multiplicity) of $X_0$ and $\epsilon\mapsto X_\epsilon$ a $\mathcal{C}^\infty$ family of $\mathcal{C}^\infty$ vector fields perturbating $X_0$. Then, there is an $\epsilon_0 >0$, such that we can find continuous functions $\lambda_i(\epsilon)$ on $]-\epsilon_0,\epsilon_0[$, and an open set $\Omega\subset \C$ such that $\lambda_i(\epsilon)\in \Omega$, and the intersection of the spectrum of $X_\epsilon$ with $\Omega$ is the set $\{ \lambda_1(\epsilon), \dots, \lambda_N(\epsilon)\}$. Additionally, the resonances are smooth functions of $\epsilon$ where they do not intersect.
\end{corollary}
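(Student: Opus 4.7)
\medskip

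\noindent\textbf{Proof strategy.}
The plan is to reduce Corollary~\ref{cor} to a finite-dimensional matrix perturbation on a fixed Hilbert space, using Theorem~\ref{thm:same-space-main} to put all the $X_\epsilon$ in a common functional-analytic framework and then running the standard Riesz-projector argument of Kato.

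First I would choose $R > 0$ so large that $\Re \lambda_i > -R + 1$ for every $i = 1, \dots, N$, and apply Theorem~\ref{thm:same-space-main} to produce the constant $\eta > 0$ and an escape function $G \in S^{\log}(T^\ast M)$. Since $\epsilon \mapsto X_\epsilon$ is $\mathcal{C}^\infty$, there exists $\epsilon_1 > 0$ with $\| X_\epsilon - X_0 \|_{\mathcal{C}^1} < \eta$ for $|\epsilon| < \epsilon_1$; hence each $X_\epsilon$ acts as a closed operator on $\mathcal{H}_G$ with discrete spectrum in $\{ \Re s > -R \}$. Since the spectrum of $X_0$ there is discrete, I may select a simple closed loop $\Gamma \subset \{ \Re s > -R + 1/2 \}$ enclosing exactly $\lambda_1, \dots, \lambda_N$ (with multiplicity) and no other resonance of $X_0$. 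The open set $\Omega$ of the statement will be the interior of $\Gamma$.

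The main step is to show that, for $|\epsilon|$ small enough, $\epsilon \mapsto (s - X_\epsilon)^{-1}$ is a $\mathcal{C}^\infty$ family of bounded operators on $\mathcal{H}_G$, uniformly in $s \in \Gamma$. The second resolvent identity
\[
(s - X_\epsilon)^{-1} - (s - X_0)^{-1} = (s - X_\epsilon)^{-1} (X_\epsilon - X_0) (s - X_0)^{-1}
\]
reduces this to the estimate $\| Y (s - X_0)^{-1} \|_{\mathcal{H}_G \to \mathcal{H}_G} \lesssim \| Y \|_{\mathcal{C}^1}$ for $s$ on $\Gamma$ and any smooth vector field $Y$, applied to $Y = X_\epsilon - X_0$, which is smooth and small in $\epsilon$. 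Iterating the identity and differentiating in $\epsilon$ yields the $\mathcal{C}^\infty$ dependence. I then introduce the spectral projector
\[
P_\epsilon := \frac{1}{2 \pi i} \oint_\Gamma (s - X_\epsilon)^{-1}\, ds,
\]
a $\mathcal{C}^\infty$ family of finite-rank projectors commuting with $X_\epsilon$. Since $\| P_\epsilon - P_0 \| < 1$ for small $\epsilon$ and rank is locally constant under norm-small deformations of projectors, $\dim \mathrm{Range}(P_\epsilon) = N$ for $|\epsilon| < \epsilon_0$, where $\epsilon_0 \leq \epsilon_1$ is chosen small enough.

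To conclude, I identify $E_\epsilon := \mathrm{Range}(P_\epsilon)$ with $E_0$ via the smooth isomorphism $P_\epsilon|_{E_0} : E_0 \to E_\epsilon$ (close to the identity, hence invertible for $\epsilon$ small), transporting $X_\epsilon|_{E_\epsilon}$ to a $\mathcal{C}^\infty$ family $A(\epsilon)$ of $N \times N$ matrices whose eigenvalues are precisely the resonances of $X_\epsilon$ inside $\Omega$. Classical matrix perturbation theory gives their continuity (as a labelled $N$-tuple), and the implicit function theorem applied to $\det(A(\epsilon) - \lambda I) = 0$ at simple roots gives $\mathcal{C}^\infty$ dependence away from crossings. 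The main obstacle is establishing the resolvent bound $\| Y (s - X_0)^{-1} \| \lesssim \| Y \|_{\mathcal{C}^1}$ on $\mathcal{H}_G$: this is a genuinely microlocal first-order mapping property of the domain of $X_0$ on the anisotropic space, not stated in Theorem~\ref{thm:same-space-main}, and its verification has to be done inside the pseudodifferential construction that produced $G$. The rest of the argument is then essentially a transplant of the Kato framework into the Faure--Sj\"ostrand setting.
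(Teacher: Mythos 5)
Your proposal takes a genuinely different route from the paper: you run the classical Kato/Riesz-projector argument directly on $\mathcal{H}_G$, whereas the paper works with the Fredholm determinant $F(X,s)=\det(\mathbb{1}+h^{-1}Q(X-h^{-1}Q-s)^{-1})$, shows it is $\mathcal{C}^k$ in $\epsilon$ and holomorphic in $s$, and invokes Rouch\'e. The difference matters, because the step you yourself flag as ``the main obstacle'' is not merely unverified --- it is false. The bound $\|Y(s-X_0)^{-1}\|_{\mathcal{H}_G\to\mathcal{H}_G}\lesssim\|Y\|_{\mathcal{C}^1}$ would require $(s-X_0)^{-1}$ to gain a full derivative, i.e.\ to map $\mathcal{H}_G$ into $\mathcal{H}_{G+\log\langle\xi\rangle}$, but $X_0$ is a first-order operator that is not elliptic: it is elliptic only along the flow direction, and the anisotropic parametrix of Faure--Sj\"ostrand/Dyatlov--Zworski yields boundedness $\mathcal{H}_{rG}\to\mathcal{H}_{rG}$ but no gain of a transversal derivative. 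Consequently, for a generic vector field $Y$ one only gets $Y(s-X_0)^{-1}\colon\mathcal{H}_{rG}\to\mathcal{H}_{rG-\log\langle\xi\rangle}$, and with this alone $\epsilon\mapsto(s-X_\epsilon)^{-1}$ is $\mathcal{C}^1$ only with a loss of one order per $\epsilon$-derivative, so your direct resolvent argument stalls.

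The paper gets around this loss in two different ways, both of which you would need to import. For the Corollary itself it differentiates the \emph{scalar} $F(X_\epsilon,s)$ rather than the resolvent; each $\partial_\epsilon$ produces a factor $\partial_\epsilon X_\epsilon$ sandwiched between resolvents, but the smoothing pseudodifferential $Q$ sitting inside $D(X,s)$ absorbs the lost $\log\langle\xi\rangle$ and keeps the trace finite, at the cost of a factor $h^{-(2+n)}$ per derivative. For the spectral projector (which is closer to your strategy) the paper's Lemma on $\Pi_\gamma$ first records that $\partial_\epsilon\Pi_\gamma$ is bounded only with a one-order loss, then uses the algebraic identity $\partial_\epsilon\Pi=\Pi\partial_\epsilon\Pi+(\partial_\epsilon\Pi)\Pi$ together with the finite rank of $\Pi$ to conclude that the range of $\partial_\epsilon\Pi$ nonetheless lies in $\mathcal{H}_{rG}$, provided one starts with $r>r(s_0)+k+1$, i.e.\ with a little spare room on the scale $\mathcal{H}_{rG+k\log\langle\xi\rangle}$. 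If you graft that device onto your argument --- work on a slightly larger $r$, accept the derivative loss at each step, and recover it via finite rank --- the Kato strategy does go through and is arguably cleaner than the determinant computation; but as written, the claimed resolvent bound is the missing piece, and it cannot be supplied.
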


Considering only the dominating resonance (with maximal real part), this was proved in several context \cite{Butterley-Liverani-07,GL06}, and the proof for several resonances is the same as for only one; However we could not find a reference for this general statement. A similar statement should also hold in finite regularity (if $X_0$ is $C^r$ with $r>1$), but we are not using the best tools to tackle this type of question, so we ignore it altogether. 

While this paper was being elaborated, other authors were considering a similar question, to study the Fried's conjecture in \cite{Dang-Guillarmou-Riviere-Shen-18}; Discussing with them helped improved the present note. I also thank Y. Chaubet for the discussion that led to Lemma \ref{lemma:reg-proj}

\section{Microlocal proof of the main theorem}
\label{sec:1}

In what follows, we consider $X$ to be another vector field, assumed to be close to $X_0$ in $\mathcal{C}^1$ norm --- we will be clear when we use that assumption. We will denote by $\varphi_t$ the corresponding flow. The notation ${\ast}^{(0)}$ means that the statement is valid both for the object related to $X_0$ and $X$.

While the fundamental ideas are always similar, there are several ways to present the proof of Theorem \ref{thm:same-space-main}. We will use the version that was presented in \cite{Dyatlov-Zworski-16}. They use spaces of the form $\mathcal{H}_{rG}$. They assume that as $|\xi|\to +\infty$,
\[
G \sim \m(x,\xi)\log(1+|\xi|),
\]
$\m$ being a smooth $0$-homogeneous function on $T^\ast M$. The action of $X$ can be lifted to $T^\ast M$ by considering the Hamiltonian flow $\Phi_t(x,\xi)$ generated by $p= \xi(X)$:
\[
\Phi_t(x,\xi) = (\varphi_t(x), (d_x \varphi_t)^{-1 T} \xi).
\]
We denote by $X_\ast$ the generator of that flow. Since $p$ is $1$-homogeneous, $\Phi_t$ also is, and it acts on $S^\ast M = T^\ast M/\R^+ \simeq \partial T^\ast M$. The proof of \cite{Dyatlov-Zworski-16} is based on the use of the notion of sinks and sources. A \emph{sink} for $\Phi_t$ is a $\Phi_t$-invariant conical closed set $L \subset T^\ast M$ such that there exists a conical neighbourhood $U$ of $L\cap \partial T^\ast M$ and constants $C',\beta'$ such that for $\xi \in U, t>0$, 
\[
|\Phi_t(\xi)| \geq \frac{1}{C'} e^{\beta' t}|\xi|,
\] 
and $\Phi_t(U \cap \partial T^\ast M) \to L\cap \partial T^\ast M$ as $t\to +\infty$. A \emph{source} is a sink for $\Phi_{-t}$. One can define subbundles of $T^\ast M$ of particular interest by setting
\[
E^\ast_{00} = (E^u_0 \oplus E^s_0)^\perp, \quad E^\ast_{u0} = (\R X_0 \oplus E^s_0)^\perp,\quad E^\ast_{s0} = (E^u_0 \oplus \R X_0)^\perp.
\]
One gets the decomposition
\begin{equation}\label{eq:dual-splitting}
T^\ast M = E^\ast_{00}\oplus E^\ast_{u0} \oplus E^\ast_{s0}.
\end{equation}
The bundles $E^{\ast}_{u,s,0}$ are invariant under the corresponding flow $\Phi_t$. One can check that $E^\ast_{u0}$ is a sink, and $E^\ast_{s0}$ is a source for $\Phi_t^0$.

The proofs of Propositions 3.1 and 3.2 in \cite{Dyatlov-Zworski-16} require the following input.
\begin{enumerate}
	\item The flow $\Phi_t$ admits a source $E^\ast_u$ and a sink $E^\ast_s$. Both are contained in $\{ p = 0\}= X^\perp$.
	\item Given neighbourhoods $U_u$, and $U_s$ as in the definition of sinks and sources, there is a $T>0$ such that $\Phi_t( U_s^c\cap X^\perp) \subset U_u $ and $\Phi_{-t}(U_u^c\cap X^\perp) \subset U_s$ for all $t> T$.
	\item the weight $\m$ satisfies
\begin{equation}\label{eq:condition-m}
\m = 1\text{ on }U_u ,\quad \m = - 1 \text{ on }U_s,\quad X_\ast \m \geq 0.
\end{equation}
\end{enumerate}
(we have inverted $E^\ast_u$ and $E^\ast_s$ from their article because we consider $X- s$ with $\Re s> - C$ instead of $- i X - \lambda$ with $\Im \lambda > - C$).

Provided conditions (1)-(3)	 are satisfied, the proof in \cite{Dyatlov-Zworski-16} applies, and we obtain the following. For any $R$, one can find an $r>0$ such that the spectrum of $X$ on $\mathcal{H}_{rG}$ is discrete in $\{ \Re s > - R\}$. For our purposes this is not sufficient because we cannot let the $r$ depend on $X$ as long as $X$ is close enough to $X_0$. However, one can give a rough estimate on the value of $r$.

Following the proof of Proposition 2.6 in \cite{Dyatlov-Zworski-16}, one finds that it applies (and this is the condition for the proof of Propositions 3.1 and 3.2 to also apply) if $r$ satisfies
\[
r c > C_0 + \sup_{T^\ast M} \frac{ X_\ast |\xi|}{|\xi|}.
\]
Let us explain how one obtains the constants $c$ and $C_0$. The $c$ comes from Lemma C.1 in \cite{Dyatlov-Zworski-16}. For some $T_1>0$, one has $|\Phi_{T_1}(\xi)|> 2 |\xi|$ for all $\xi \in U_u$, and $c$ is defined by
\[
\int_0^{T_1} |\Phi_t(x,\xi)| dt \leq \frac{1}{c} |\xi|, \ \forall \xi\in T^\ast M.
\]
We deduce that 
\[
c = \Lambda e^{-\Lambda T_1} = \Lambda \left(\frac{C'}{2} \right)^{\frac{\Lambda}{\beta'}},
\]
where $|\Phi_t(x,\xi)| \leq e^{\Lambda |t|} |\xi|$ for $\xi \in T^\ast M$, and $C',\beta'$ are the constants in the definition of being a sink for $E^\ast_u$. The constant $\Lambda$ can be estimated directly as $\Lambda \leq \| dX\|_{L^\infty}$ by usual estimates.

The constant $C_0$ in the proof of Proposition 2.6 is chosen at the end of the proof to absorb some other terms. To be more precise, $C_0$ has to satisfy $C_0 > 4 C_1$, where
\[
\Re \langle (X-s) u, u \rangle \leq C_1 \| u\|_{L^2},
\]
for all $u \in L^2(M)$. We get $C_1 = \| \mathrm{div} X\|_{L^\infty}/2 - \Re s$. Finally, we get
\[
r > r_X(s):= \frac{1}{\Lambda}\left(\frac{2}{C'} \right)^{\frac{\Lambda}{\beta'}}\left( 2 \| \mathrm{div} X\|_{L^\infty} - 4 \Re s  +\sup_{T^\ast M} \frac{ X_\ast |\xi|}{|\xi|} \right) 
\]
We call $r_X(s)$ the \emph{minimal strength}, and this is called the \emph{Threshold condition}. The proof of Theorem \ref{thm:same-space-main} will thus be done if we can prove the following lemma:
\begin{lemma}\label{lemma:main}
There are conical open sets $U_u$ and $U_s$, $\m\in C^\infty(S^\ast M)$ and $\eta>0$ such that whenever $\| X - X_0 \|_{C^1} < \eta$, $X$ satisfies (1), (2) and (3), with $U_u$ (resp $U_s$) an admissible neighbourhood for $E^\ast_u$ (resp. $E^\ast_s$). Additionally, the constants $C'$ and $\beta'$ satisfy
\[
C' > \frac{C}{2},\quad \beta' > \frac{\beta}{2},
\]
where $C,\beta$ are the constants defined in equation \eqref{eq:def-beta-C}. The minimal strength $r_X(s)$ is thus bounded uniformly.
\end{lemma}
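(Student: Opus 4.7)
\emph{Approach.} The plan is to exploit the structural stability of the Anosov splitting under $C^1$-perturbations, choose the geometric data $U_u$, $U_s$, $\m$ once and for all from $X_0$, and then verify by compactness and continuity that all three conditions persist when $\|X-X_0\|_{C^1}<\eta$. By the classical cone criterion (Anosov, Hirsch--Pugh--Shub), the splitting $TM = \R X \oplus E^u \oplus E^s$, its dual $T^*M = E^*_{00}\oplus E^*_u\oplus E^*_s$, and the exponential rates depend $C^0$-continuously on $X$ in the $C^1$ topology; for $\eta$ small, the bundles $E^*_u, E^*_s$ lie within any prescribed conical distance of $E^*_{u0}, E^*_{s0}$, and the expansion/contraction rates are preserved up to a factor two, giving $C'<2C$ and $\beta'>\beta/2$. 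I would then fix three nested conical neighborhoods $U_u^0 \Subset U_u \Subset U_u^1$ of $E^*_{u0}$, similarly $U_s^0 \Subset U_s \Subset U_s^1$ of $E^*_{s0}$, small enough that each is admissible for $X_0$ with strict slack in the rate estimates, so that they remain admissible for every $X$ in the $C^1$-ball of radius $\eta$; this settles condition (1).

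\emph{Building $\m$ and verifying (3).} For (3) I would build $\m\in C^\infty(S^*M)$ using only $X_0$. Starting from a smooth $0$-homogeneous $\rho:T^*M\setminus\{0\}\to[-1,1]$ with $\rho=+1$ on $U_u^1$ and $\rho=-1$ on $U_s^1$, a standard flow-averaging construction along $\Phi_t^0$ over a sufficiently long window $T_1$ (a forward average of a nonnegative piece plus a backward average of a nonpositive piece) yields $\m$ satisfying $\m=+1$ on $U_u$, $\m=-1$ on $U_s$, $X_{0,\ast}\m\geq 0$ everywhere, and $X_{0,\ast}\m\geq 2\epsilon>0$ on the compact region $K:=S^*M\setminus(U_u^0\cup U_s^0)$, for some $\epsilon>0$. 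For a perturbed $X$ write
\[
X_\ast\m = X_{0,\ast}\m + (X-X_0)_\ast\m;
\]
the two terms vanish on $U_u\cup U_s$ since $\m$ is locally constant there, and on $K$ the correction is bounded by $C\|X-X_0\|_{C^1}\|\m\|_{C^1(S^*M)}$ (the $0$-homogeneity of $\m$ keeps the terms $\xi_j\partial_{\xi_i}\m$ bounded as a symbol of order $0$). For $\eta$ small this is less than $\epsilon$, so $X_\ast\m\geq 0$ globally.

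\emph{Trapping, uniform bound, main obstacle.} For (2), $X_0^\perp\cap S^*M$ is a compact $\Phi^0_t$-invariant hypersurface on which $E^*_{u0}\cap S^*M$ is an attractor whose basin-complement is exactly the repeller $E^*_{s0}\cap S^*M$, so there exists $T>0$ with $\Phi^{X_0}_T(U_s^c\cap X_0^\perp\cap S^*M)\subset U_u^0$. Continuity of the time-$T$ map $\Phi_T^X$ in $C^0(S^*M)$ and of the smooth co-rank-one subbundle $X^\perp$ propagate this to $\Phi_T^X(U_s^c\cap X^\perp)\subset U_u$ for $\eta$ small, and forward invariance of the sink neighborhood extends the inclusion to all $t>T$. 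Finally $\|\mathrm{div}\,X\|_{L^\infty}$ and $\sup X_\ast|\xi|/|\xi|\leq\|dX\|_{L^\infty}$ are both controlled by $\|X_0\|_{C^1}+\eta$, so together with the uniform $C',\beta'$ from Step 1 the formula for $r_X(s)$ given in the text is uniformly bounded on the ball. The main obstacle I expect is in Step 2: calibrating the averaging window $T_1$, the three nested cutoffs, and the smallness $\eta$ so that the strict lower bound $2\epsilon$ on $X_{0,\ast}\m$ over the compact transition region $K$ really survives the $C^1$-perturbation --- everything else then reduces to routine compactness and continuity arguments for Anosov flows.
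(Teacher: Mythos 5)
Your overall strategy --- build $\m$ once from $X_0$ by flow-averaging, then push the three conditions through by compactness and $C^0$-continuity of the dynamics in the $C^1$-topology --- is the same as the paper's. The paper also deliberately avoids the appeal to structural stability (it notes the lemma ``can probably be seen as a corollary of structural stability \ldots\ However, we can give a full proof directly''), but that is a stylistic difference, not a gap.

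The genuine gap is in your Step~2, and it is not the calibration issue you flag at the end: the intermediate claim that the averaged $\m$ satisfies $X_{0,\ast}\m \geq 2\epsilon > 0$ on all of $K := S^\ast M\setminus(U_u^0\cup U_s^0)$ is simply unachievable. The neutral bundle $E^\ast_{00} = (E^u_0\oplus E^s_0)^\perp$ projects to a compact $\Phi^{0,\infty}_t$-invariant subset of $S^\ast M$ that lies entirely inside $K$ once $U_u^0, U_s^0$ are small conical neighbourhoods of $E^\ast_{u0}, E^\ast_{s0}$. Pick any $\Phi^{0,\infty}_t$-invariant probability measure $\mu$ supported there (Krylov--Bogolyubov); for every smooth $g$ one has $\int X_{0,\ast}g\, d\mu = 0$, so $X_{0,\ast}\m$ cannot be bounded below by a positive constant on $\operatorname{supp}\mu \subset K$. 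In fact the weight one builds by flow-averaging has derivative $F = m_0\circ\Phi^{0,\infty}_T - m_0\circ\Phi^{0,\infty}_{-T}$, which vanishes identically on $E^\ast_{00}$. Your perturbation estimate then breaks precisely there: with $X_{0,\ast}\m = 0$ but $\m$ \emph{not} locally constant on $E^\ast_{00}$, the correction $(X - X_0)_\ast\m$ can push $X_\ast\m$ negative no matter how small $\eta$ is.

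The paper's construction is designed exactly to neutralize this. After forming the average $m = \int_{-T}^T m_0\circ\Phi^{0,\infty}_t\,dt$, one observes (this is the ``crucial lemma'') that whenever $F(\xi) = 0$ --- which happens precisely on the invariant sets you cannot control, including $E^\ast_{00}$ --- the value $|m(\xi) - T|$ is bounded away from $0$; conversely $|m-T| < \varepsilon$ forces $F \geq \delta > 0$. One then sets $\m = \chi(m - T)$ with $\chi$ locally constant outside $[-\varepsilon,\varepsilon]$. This makes $\m$ \emph{locally constant} in a neighbourhood of $E^\ast_{00}$ (and of $E^\ast_{u0}$, $E^\ast_{s0}$), so $X_\ast\m \equiv 0$ there for \emph{every} vector field $X$, not just $X_0$; and on the remaining compact set $\{|m-T| \leq \varepsilon\} = \operatorname{supp}\chi'(m-T)$ one really does have the uniform lower bound $X_{0,\ast}m \geq \delta$, which survives an $\mathcal{O}(T\lambda)$ perturbation. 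Your proposal is missing this $\chi$-cutoff step and the accompanying lemma; adding them turns your sketch into the paper's argument.

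A minor point: you write $C' < 2C$ where the lemma states $C' > C/2$; given the form $|\Phi_t(\xi)| \geq (1/C')e^{\beta' t}|\xi|$ in the definition of a sink, what one actually needs for a uniform bound on $r_X(s)$ is an upper bound on $C'$ and a lower bound on $\beta'$, so your version is the meaningful one (the inequality in the lemma's statement appears to be a sign slip).
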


Except for the construction of the weight function $\m$, our Lemma can probably be seen as a corollary of structural stability for Anosov flows \cite{DeLaLlave-Marco-Moryon-86}. However, we can give a full proof directly, which is quite elementary.

A last remark is that to work with the spaces $\Op(e^{-rG})H^k(M)= \mathcal{H}_{rG + k\log\langle \xi\rangle}$, the same proof applies. The Threshold conditions becomes $r+k > r_X(s)$, and $r-k > r_X(s)$, i.e. $r> r_X(s) + |k|$.

\section{Building the weight function}

This section is devoted to proving Lemma \ref{lemma:main}. We are now working in $S^\ast M$. We start by building a weight adapted to $X_0$, following the strategy for \cite{Faure-Sjostrand-10}. We denote by $\Phi_t^{(0),\infty}$ the flow $\Phi_t^{(0)}$ projected on $S^\ast M$. Recall the following:
\begin{lemma}
The bundles $E^u_0$ and $E^s_0$ are continuous. In particular, there is a positive lower bound for the angle between any two bundles in $E^u_0$, $E^s_0$, $E^0_0= \R X_0$.
\end{lemma}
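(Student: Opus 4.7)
The plan is to prove continuity of $E^s_0$ and $E^u_0$ as sections of the Grassmann bundle by a closed-graph argument based only on \eqref{eq:def-beta-C}; the uniform angle lower bound then falls out by compactness of $M$. The only genuinely substantive step — and the main obstacle — is to identify the stable and unstable bundles dynamically without already presupposing an angle lower bound, which would be circular.

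The key intermediate fact is the characterization
\[
E^s_0(x) = \{v \in T_x M : \|d\varphi^0_t(v)\| \to 0 \text{ as } t\to +\infty\},
\]
with a symmetric statement for $E^u_0$ using $\varphi^0_{-t}$. The inclusion $\subset$ is just \eqref{eq:def-beta-C}. For the converse, the plan is to decompose $v = v^s + v^u + a X_0(x)$ along the given splitting and apply $d\varphi^0_t$; the forward form of the unstable estimate, $\|d\varphi^0_t(v^u)\| \geq C^{-1} e^{\beta t}\|v^u\|$, combined with the triangle inequality
\[
\|d\varphi^0_t(v^u)\| \leq \|d\varphi^0_t(v)\| + \|d\varphi^0_t(v^s)\| + |a|\,\|X_0(\varphi^0_t(x))\|,
\]
shows that the right-hand side stays bounded while the left grows exponentially unless $v^u = 0$. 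The residual identity then gives $|a|\,\|X_0(\varphi^0_t(x))\| \to 0$, and since $\inf_M \|X_0\| > 0$ by compactness, one deduces $a=0$. Hence $v = v^s \in E^s_0(x)$. This argument uses only the exponential estimates and the nowhere-vanishing of $X_0$, so there is no circular appeal to angle bounds.

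Once this is in hand, the graph $\{(x,v)\in TM : v\in E^s_0(x)\}$ is automatically closed: passing to the limit in $\|d\varphi^0_t(v_n)\| \leq C e^{-\beta t}\|v_n\|$ for each fixed $t \geq 0$ shows that any limit $v$ of vectors $v_n \in E^s_0(x_n)$ still satisfies $\|d\varphi^0_t(v)\| \to 0$, hence lies in $E^s_0(x)$ by the characterization. Since $d_s = \dim E^s_0(x)$ is constant (the hypothesis is that we have a genuine vector-bundle decomposition) and the fiber Grassmannian is compact, closed graph plus constant dimension yields continuity of $x\mapsto E^s_0(x)$: any subsequential limit $L$ of $E^s_0(x_n)$ is a $d_s$-dimensional subspace contained in $E^s_0(x)$, hence equal to it. The same argument with the flow reversed gives continuity of $E^u_0$, and $\R X_0$ is continuous since $X_0$ is smooth and nowhere vanishing. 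The pairwise angle $x\mapsto\angle(E^i_0(x), E^j_0(x))$, for $i \neq j$ in $\{u,s,0\}$, is then a continuous, strictly positive function on $M$ (strict positivity because the three bundles are in direct sum at every point), and attains a positive minimum by compactness — which is the desired uniform lower bound.
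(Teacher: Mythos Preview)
Your argument is correct. In fact the paper does not give its own proof of this lemma at all --- it simply records that ``This is a very usual lemma, contained in theorem 3.2 in \cite{Hirsch-Pugh-70}.'' Your self-contained closed-graph argument via the dynamical characterization of $E^s_0$ (and symmetrically $E^u_0$) is the standard route and is sound: the only step that might look delicate --- decomposing $v$ along the splitting at a single point and estimating each piece --- uses only the uniform exponential constants from \eqref{eq:def-beta-C} and the nonvanishing of $X_0$, not any a priori angle bound, so there is no circularity. The passage from closed graph plus constant rank to continuity in the Grassmann bundle is clean, and the compactness argument for the uniform angle lower bound is immediate. In short, your write-up supplies the proof that the paper outsources to Hirsch--Pugh.
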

This is a very usual lemma, contained in theorem 3.2 in \cite{Hirsch-Pugh-70}.

\begin{lemma}\label{lem:neighbourhoods}
Given $\epsilon>0$, there exist $T>0$ such that when $t\geq T$, for $\xi \in S^\ast M$,
\begin{align*}
d(\xi, E^\ast_{s0})>\epsilon &\Rightarrow d( \Phi^{0,\infty}_t(\xi), E^\ast_{u0}\oplus E^\ast_{00}) \leq \epsilon\\
\intertext{and}
d(\xi, E^\ast_{u0}\oplus E^\ast_{00})>\epsilon &\Rightarrow d( \Phi^{0,\infty}_{-t}(\xi), E^\ast_{s0}) \leq \epsilon
\end{align*}
\end{lemma}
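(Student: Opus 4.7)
The plan is to use the invariance of the splitting $T^\ast M = E^\ast_{00}\oplus E^\ast_{u0}\oplus E^\ast_{s0}$ under $\Phi^0_t$ together with dynamical estimates on each factor, and then track how the three components of an arbitrary covector evolve in time.

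I would first fix a continuous Riemannian norm $|\cdot|$ on $T^\ast M$ and the induced distance on $S^\ast M$. By the previous lemma the three subbundles meet at angles bounded uniformly below, so any $\xi\in T^\ast M$ decomposes as $\xi = \xi^0+\xi^u+\xi^s$ with $\xi^\bullet \in E^\ast_{\bullet 0}$ and $|\xi^0|+|\xi^u|+|\xi^s|\sim |\xi|$ uniformly in the base point. The three subbundles are $\Phi^0_t$-invariant. The main technical input is that \eqref{eq:def-beta-C} dualises to uniform constants $C'\geq 1$ and $\beta'>0$ such that, for $t\geq 0$,
\begin{align*}
|\Phi^0_t\xi^u| &\geq (C')^{-1} e^{\beta' t}|\xi^u|, \quad \xi^u\in E^\ast_{u0},\\
|\Phi^0_t\xi^s| &\leq C' e^{-\beta' t}|\xi^s|, \quad \xi^s\in E^\ast_{s0},\\
(C')^{-1}|\xi^0| \leq |\Phi^0_t\xi^0| &\leq C'|\xi^0|, \quad \xi^0\in E^\ast_{00},
\end{align*}
the last line using that $E^\ast_{00}$ is a line bundle generated by the covector dual to the non-vanishing $X_0$.

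With these estimates in hand, the first implication follows by direct comparison. Take $\xi\in S^\ast M$ with $|\xi|=1$ and $d(\xi,E^\ast_{s0})>\epsilon$: the angle bound gives $|\xi^0|+|\xi^u|\geq c_1\epsilon$ for some $c_1>0$ independent of $\xi$. Picking the larger of the two pieces yields $|\Phi^0_t\xi|\geq c_2\epsilon$ for every $t\geq 0$ (either via the expansion on $E^\ast_{u0}$ or via the uniform lower bound on $E^\ast_{00}$). Combined with $|\Phi^0_t\xi^s|\leq C' e^{-\beta' t}$, this gives
\[
d(\Phi^{0,\infty}_t\xi,\,E^\ast_{u0}\oplus E^\ast_{00}) \lesssim \frac{|\Phi^0_t\xi^s|}{|\Phi^0_t\xi|} \leq \frac{C'' e^{-\beta' t}}{\epsilon},
\]
which is at most $\epsilon$ as soon as $t\geq T:=(\beta')^{-1}\log(C''/\epsilon^2)$. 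The second implication is the time-reversal of the first, and follows from the identical argument applied to $\Phi^0_{-t}$ with the roles of $E^\ast_{u0}$ and $E^\ast_{s0}$ interchanged.

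The one genuinely delicate step is the dualisation mentioned above: deriving the three displayed estimates on $E^\ast_{u0},\,E^\ast_{s0},\,E^\ast_{00}$ from \eqref{eq:def-beta-C} uniformly on $M$. After that, the comparison argument is completely elementary.
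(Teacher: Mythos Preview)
Your proof is correct and follows essentially the same route as the paper: decompose $\xi$ along the dual splitting, use the angle lower bound to turn $d(\xi,E^\ast_{s0})>\epsilon$ into $|\xi^0|+|\xi^u|\geq c\epsilon$, obtain a uniform lower bound on $|\Phi^0_t\xi|$ from the non-contracting components, bound the ratio $|\Phi^0_t\xi^s|/|\Phi^0_t\xi|$ by $Ce^{-\beta' t}/\epsilon$, and choose $T\sim\beta'^{-1}\log(C/\epsilon^2)$. The only cosmetic difference is that the paper derives the two-sided bound on the $E^\ast_{00}$ component concretely, by showing $|\xi_0|\sim|\xi(X_0)|$ and using that $\xi(X_0)$ is conserved along the lifted flow, whereas you package this together with the $E^\ast_{u0}$ and $E^\ast_{s0}$ estimates as three dualised inequalities; the content is the same.
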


In the proof, the constant $\infty > C>0$ may change at every line, but it is always controlled by the lower bound on the angles between the bundles.
\begin{proof}
Since the angles between the distributions $E^\ast_{u0,s0,00}$ are bounded by below, we have a constant $C>0$ such that if $\xi = \xi_s + \xi_u+\xi_0$ is the decomposition with respect to \eqref{eq:dual-splitting},
\begin{equation*}
d(\xi, E^\ast_{s0}) > \epsilon \rightarrow |\xi_u + \xi_0|> C \epsilon. 
\end{equation*}
Next, we also have
\begin{equation*}
|\xi_0| = \sup_{|u| = 1} \xi_0(u) = \sup \{ \xi( \alpha X_0)\ |\ \|\alpha X_0 +u' \| = 1,\ u'\in E^u_0 \oplus E^s_0\}.
\end{equation*}
By the bounded angle property, if $\| \alpha X_0 + u' \| = 1$, we have $|\alpha| \leq C$. We deduce ($X_0$ never vanishes !)
\begin{equation*}
\frac{1}{C}|\xi(X_0)| \leq |\xi_0| \leq C |\xi(X_0)|.
\end{equation*}
Since by definition, $\varphi^0_{t\ast}(X_0) = X_0$, $(\Phi^0_t(\xi))(X_0(\varphi_t(x)) = \xi(X_0)$, we deduce that
\begin{equation*}
|\Phi_t^0 (\xi) | \geq \frac{1}{C}(|\Phi_t^0(\xi_u)|+  |\Phi_t^0(\xi_0)|) \geq ( |\xi_u|+|\xi_0|)/C^2 \geq  \epsilon/ C^3.
\end{equation*}
In particular, again using the bounded angle property, 
\begin{equation*}
d(\Phi^{0,\infty}_t(\xi), E^\ast_{u0}\oplus E^\ast_{00}) \leq C\frac{ |\Phi_t^0(\xi_s)|}{|\Phi_t^0(\xi)|}\leq \frac{C}{\epsilon}e^{-\beta t}.
\end{equation*}
To obtain the first implication, taking $T \geq  (\log C - 2 \log \epsilon)/\beta$ will suffice, and for the second implication, a similar reasoning will apply. 
\end{proof}

Since $E^{\ast}_{s0}$ and $E^\ast_{u0}\oplus E^\ast_{00}$ are continuous and transverse, for $\epsilon>0$ small enough, 
\[
\{ \xi\ |\ d(\xi, E^\ast_{s0}) \leq \epsilon\} \cap \{ \xi\ |\ d(\xi, E^\ast_{u0}\oplus E^\ast_{00}) \leq \epsilon\} = \emptyset.
\]
For such an $\epsilon>0$, we pick $m_0 \in \mathcal{C}^\infty(S^\ast M)$ taking values in $[0,1]$, such that 
\begin{equation*}
\{ \xi\ |\ d(\xi, E^\ast_{s0}) \leq \epsilon\} \subset \{ m_0 = 0\} \text{ and }\{ \xi\ |\ d(\xi, E^\ast_{u0}\oplus E^\ast_{00}) \leq \epsilon\} \subset \{ m_0 = 1 \}.
\end{equation*}
Then, let
\begin{equation*}
m := \int_{-T}^T m_0 \circ \Phi_t^{0,\infty} dt.
\end{equation*}
The derivative of $m$ along the flow $\Phi^{0,\infty}_t$ is
\begin{equation*}
F := m_0\circ \Phi^{0,\infty}_T - m_0\circ\Phi^{0,\infty}_{-T}.
\end{equation*}
(it is a smooth function). Consider a point $\xi \in S^\ast M$ such that
\begin{equation*}
F(\xi) = 0.
\end{equation*}
Assume that $m_0(\xi) \in ]0,1[$. Then by definition of $T$, we get that $m_0(\Phi^{0,\infty}_T(\xi)) = 1$ and $m_0(\Phi^{0,\infty}_{-T}(\xi)) = 0$ so that $F(\xi)$ cannot vanish. This contradiction implies that $m_0(\xi)$ is either $0$ or $1$. By symmetry, we can assume that $m_0(\xi) = 0$. 

In that case, by Lemma \ref{lem:neighbourhoods}, we get that $m_0(\Phi^{0,\infty}_{T}(\xi)) = m_0(\Phi^{0,\infty}_{-T}(\xi)) = 0$. Then, using Lemma \ref{lem:neighbourhoods} again, we get that $m_0(\Phi^{0,\infty}_{-t}(\xi)) = 0$ for $t\geq 0$. In particular, we have
\begin{equation*}
m(\xi) = \int_0^T m_0(\Phi^{0,\infty}_t(\xi)) dt < T.
\end{equation*}
Conversely, if we assumed that $m_0(\xi) = 1$, we would find that
\begin{equation*}
m(\xi) > T.
\end{equation*}

Now, we deduce the crucial lemma
\begin{lemma}
There are constants $\varepsilon,\delta>0$ such that 
\begin{equation*}
\{ | m - T | < \varepsilon \} \subset \{ F(\xi) \geq \delta\}.
\end{equation*}
\end{lemma}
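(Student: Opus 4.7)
The plan is a compactness-and-contradiction argument that simply packages the dichotomy established just before the lemma statement. I would suppose, for a contradiction, that no such $\varepsilon,\delta$ exist; then for each $n\in\N$ one can pick $\xi_n\in S^\ast M$ with $F(\xi_n)<1/n$ and $|m(\xi_n)-T|<1/n$. Since $S^\ast M$ is compact and both $F$ and $m$ are continuous in $\xi$ (the former is a finite difference of compositions of the smooth $m_0$ with the smooth flow, the latter a uniformly bounded integral of such), one extracts a subsequence $\xi_{n_k}\to \xi_\infty$ with $F(\xi_\infty)=0$ and $m(\xi_\infty)=T$.

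The second step is to derive a contradiction from these two equalities. The preceding paragraph already did the bulk of the work: $F(\xi_\infty)=0$ forces $m_0(\xi_\infty)\in\{0,1\}$, and then Lemma \ref{lem:neighbourhoods} forces the backward orbit of $\xi_\infty$ to lie in $\{m_0=0\}$ when $m_0(\xi_\infty)=0$, and the forward orbit to lie in $\{m_0=1\}$ when $m_0(\xi_\infty)=1$. Consequently $m(\xi_\infty)<T$ in the first case and $m(\xi_\infty)>T$ in the second, either way contradicting $m(\xi_\infty)=T$.

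The one point worth highlighting is the \emph{strict} inequality $m(\xi_\infty)<T$ (resp.\ $>T$), without which compactness alone would not close the argument. This is where continuity of $m_0$ at $\xi_\infty$ enters essentially: in the case $m_0(\xi_\infty)=0$, continuity supplies some $\tau>0$ such that $m_0\circ\Phi^{0,\infty}_t(\xi_\infty)\leq 1/2$ for $t\in[0,\tau]$, giving
\[
m(\xi_\infty)=\int_0^T m_0\circ\Phi^{0,\infty}_t(\xi_\infty)\,dt\leq T-\tau/2<T,
\]
and the symmetric argument handles $m_0(\xi_\infty)=1$. I do not foresee any genuine obstacle: the preparatory work above the lemma does almost all of it, and compactness of $S^\ast M$ handles what remains.
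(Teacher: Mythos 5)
Your proof is correct and rests on the same two pillars as the paper's: compactness of $S^\ast M$ together with the dichotomy established just before the lemma (namely, $F(\xi)=0$ forces $m_0(\xi)\in\{0,1\}$ and hence $|m(\xi)-T|>0$ strictly). The paper packages this directly via uniform continuity --- taking $2\delta:=\inf_{\{F=0\}}|m-T|>0$ and the auxiliary quantity $\ell(\varepsilon)=\sup\{d(\xi,\{F=0\}):F(\xi)\leq\varepsilon\}$ --- whereas you extract a convergent subsequence and contradict the dichotomy at the limit point, but the mathematical content is identical, and your explicit justification of the strict inequality $m(\xi_\infty)\neq T$ is a welcome clarification of a step the paper compresses.
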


\begin{proof}
Since $\{ F(\xi) = 0 \}$ is compact, the $\inf$ of $|m-T|$ is attained and by the preceding argument, is strictly positive. Denote it by $2\delta$. 

Now, consider
\begin{equation*}
\ell(\varepsilon) := \sup\{ d(\xi, \{ F = 0\})\ |\ F(\xi) \leq \varepsilon\}.
\end{equation*}
By continuity of $F$ and compacity of $S^\ast M$, we have that $\ell(\varepsilon) \to 0$ as $\varepsilon \to 0$. Finally, since $m$ is continuous, we have an $\varepsilon'>0$ such that whenever $d(\xi, \{F=0\}) < \varepsilon'$, $|m-T| > \delta$. It suffices then to take $\varepsilon$ such that $\ell(\varepsilon) < \varepsilon'$.
\end{proof}

Let us now come back to our perturbation problem. Consider $X = X_0 + \lambda V$ with $V$ a smooth vector field with $\| V \|_{\mathcal{C}^1} \leq 1$, and $\lambda>0$ small. The vector fields generating $\Phi^{(0)}_t$ are the hamiltonian vector fields of the principal symbols of $- iX_{(0)}$, which are $\xi(X_{(0)})$. In particular, they involve the first derivative of the vector fields $X_{(0)}$, so that they are $\mathcal{O}(\lambda)$-$\mathcal{C}^0$-close, with a constant depending on $V$ only through $\|V\|_{\mathcal{C}^1}$.

Then the corresponding vector fields on $S^\ast M$, $X^\infty_0$ and $X^\infty= X^\infty_0+ \lambda V^\infty$ that generate the boundary flows $\Phi^{(0),\infty}_t$ also are $\mathcal{O}(\lambda)$-$\mathcal{C}^0$-close, since they are the projection on $T(S^\ast M)$ of the previous hamiltonian vector fields. Observe that
\begin{equation*}
X^\infty m  = X^\infty_0 m + \lambda \int_{-T}^T V^\infty (m_0\circ\Phi^{0,\infty}_t) dt
\end{equation*}
Since $\|V^\infty\|_{L^\infty}=\mathcal{O}(1)$, the integral in the RHS is of size $\mathcal{O}(T \lambda)$. Now we will use our previous arguments. Let $\chi$ be a smooth function on $\R$, such that $\chi$ is constant equal to $-1$ in $]-\infty,-\varepsilon]$ and constant equal to $1$ in $[\varepsilon, +\infty[$, and strictly increasing in $[-\varepsilon,\varepsilon]$. Let
\begin{equation*}
{\bf{m}} := \chi(m-T).
\end{equation*}
We get directly that
\begin{equation*}
X_0^\infty {\bf{m}} \geq 0
\end{equation*}
But we also have
\begin{equation*}
X^\infty{\bf{m}} = \chi'(m-T) \left[ X^\infty_0 m + \lambda \int_{-T}^T V^\infty (m_0\circ\Phi^{0,\infty}_t) dt \right]
\end{equation*}
On the support of $\chi'(m-T)$, we have $X^\infty_0 m \geq \delta$. In particular, with $\lambda$ smaller than $\eta_0=\delta/CT$ with $C>0$ large enough, we get that $X^\infty {\bf{m}} \geq 0$.

\begin{lemma}\label{lemma:sources}
There is an $0< \eta \leq \eta_0$ such that the following holds. Let $X$ be a $C^1$ vector field such that $\| X-X_0\|_{\mathcal{C}^1} \leq \eta$. Then $\Phi_t$, when restricted to $\{\xi(X)=0\}$ has a source $E^\ast_s$, contained in $\{ {\bf m} = - 1\}$ (resp. a sink $E^\ast_u$ contained in $\{ {\bf m} =  1\}$.

There exist a $T>0$ --- only depending on $\eta$ --- such that $\Phi_t( \{ {\bf m} > -1\}) \subset \{ {\bf m}= 1\}$ for all $t\geq T$. (and the converse statement in negative time also holds). 
\end{lemma}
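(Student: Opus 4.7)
The plan is to handle the two assertions of the lemma separately: the \emph{quick traversal} statement is essentially a direct corollary of the monotonicity of $\m$ built in the previous section, while the \emph{source and sink} part will follow from persistence of the hyperbolic splitting under $C^1$-small perturbations.

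For the traversal, I would sharpen the inequality $X^\infty \m \geq 0$ to a strict positive lower bound on $X^\infty m$ over the transition strip $S := \{|m-T|\leq \varepsilon\}$, which is where $\chi'(m-T)$ is supported. On $S$ we have $X_0^\infty m = F \geq \delta$ by the crucial lemma, and hence
\[
X^\infty m \;=\; X_0^\infty m + \lambda\!\int_{-T}^T\!V^\infty(m_0\circ\Phi_t^{0,\infty})\,dt \;\geq\; \delta - CT\lambda \;\geq\; \delta/2,
\]
for $\eta$ small enough, using $\|V^\infty\|_{L^\infty} = O(1)$. So $m$ strictly increases along $\Phi_t$-orbits while in $S$. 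For any $\xi\in S^\ast M$ with $\m(\xi)>-1$, i.e.\ $m(\xi)>T-\varepsilon$, either $m(\xi)\geq T+\varepsilon$ already and $\m(\xi)=1$, or the orbit stays in $S$ and $m(\Phi_s\xi)$ reaches the upper boundary $T+\varepsilon$ in time at most $4\varepsilon/\delta$, at which point $\m=1$. Forward invariance of $\{\m=1\}$, inherited from $X^\infty \m\geq 0$ together with $\m\leq 1$, keeps the orbit inside thereafter. Any $T \geq 4\varepsilon/\delta$ thus works, and it depends only on the a priori data $X_0,\eta$; the backward-in-time statement follows by replacing $X$ with $-X$.

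For the source and sink, I would invoke the classical persistence of the hyperbolic splitting for $C^1$-small perturbations of $X_0$: for $\eta$ sufficiently small there is an $X$-invariant splitting $TM = \R X \oplus E^u \oplus E^s$ with $E^{u,s}$ close in $C^0$ to $E^{u,s}_0$ and rates $\|d\varphi_{-t|E^u}\|\leq C_1 e^{-\beta_1 t}$, $\|d\varphi_{t|E^s}\|\leq C_1 e^{-\beta_1 t}$ satisfying $C_1<2C,\ \beta_1>\beta/2$ (cf.\ \cite{DeLaLlave-Marco-Moryon-86}). Define $E^\ast_u := (\R X \oplus E^s)^\perp$ and $E^\ast_s := (\R X \oplus E^u)^\perp$; these are conical closed $\Phi_t$-invariant subsets of $X^\perp$, and dualizing the hyperbolic bounds yields the exponential growth estimates required to make $E^\ast_u$ a sink and $E^\ast_s$ a source, with $C'>C/2,\ \beta'>\beta/2$. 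For the localization, $E^\ast_{u0}\subset E^\ast_{u0}\oplus E^\ast_{00}$ is $\Phi_t^0$-invariant and lies in the open set where $m_0\equiv 1$ on $\Phi^{0,\infty}_t$-orbit segments of length $2T$, so $m=2T$ and $\m=1$ on a whole neighborhood of $E^\ast_{u0}$. By $C^0$-closeness of $E^\ast_u$ to $E^\ast_{u0}$ and continuity of $\m$ one gets $E^\ast_u\subset \{\m=1\}$ for $\eta$ small, and symmetrically $E^\ast_s\subset\{\m=-1\}$.

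The main obstacle is the quantitative persistence of hyperbolicity, specifically preserving the hyperbolic rates up to halving of constants as stated in the lemma. One could in principle avoid appealing to structural stability by constructing $E^u$ and $E^s$ directly from cone fields tied to the level sets of $\m$---which was designed precisely to witness the splitting---propagating them forward/backward under $\Phi_t$ and using the $C^1$-smallness of $X-X_0$, but this would amount to reproducing standard graph-transform machinery the note prefers to outsource by reference. Granted the hyperbolicity input, every remaining step reduces to continuity of $\m$ and the monotonicity already established.
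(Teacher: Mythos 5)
Your proposal is correct in spirit but deliberately takes the road the paper chose to avoid. For the source/sink, you invoke persistence of the hyperbolic splitting under $C^1$-small perturbations (structural stability) and then dualize to obtain $E^\ast_u=(\R X\oplus E^s)^\perp$, $E^\ast_s=(\R X\oplus E^u)^\perp$. The paper explicitly remarks that the lemma ``can probably be seen as a corollary of structural stability'' but prefers ``a full proof directly, which is quite elementary'': it constructs nested open cones $V_1\subset V_1'\subset V_2$ around $E^\ast_{u0}$ such that $\Phi^0_t(V_2)\subset V_1$ with $|\Phi^0_t\xi|>3|\xi|$, notes that for $\|X-X_0\|_{C^1}$ small one still has $\Phi_t(V_2)\subset V_1'\subset V_2$ with $|\Phi_t\xi|>2|\xi|$, and then defines $E^\ast_u(x)$ directly as $\{\xi\in V_2(x): \Phi_{-t}\xi\in V_2\ \forall t\geq 0\}$, a decreasing intersection of compact cones. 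That cone-trapping construction never needs the perturbed splitting $TM=\R X\oplus E^u\oplus E^s$ at all, and it delivers the quantitative radial expansion and the constants $C'>C/2$, $\beta'>\beta/2$ (needed in Lemma \ref{lemma:main}) for free. This is exactly the ``main obstacle'' you flag yourself: a qualitative persistence theorem does not, off the shelf, deliver the halved constants, whereas the direct cone argument trivially does. The localization step $E^\ast_u\subset V_2\subset\{\m=1\}$ you give (invariance of $E^\ast_{u0}$ implies $m=2T$ there, hence $\m=1$ on a neighbourhood, plus $C^0$-closeness) agrees with the paper's closing sentence.

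For the traversal part, your argument via the sharpened bound $X^\infty m\geq\delta/2$ on the transition strip, monotone exit through the upper level set and forward invariance of $\{\m=1\}$, is a legitimate and in fact more transparent route than the paper's terse ``for points neither in the source nor in the sink, use Lemma~\ref{lem:neighbourhoods}''. One small point of hygiene: the crucial lemma gives $F\geq\delta$ only on the open set $\{|m-T|<\varepsilon\}$, so one should either shrink the transition interval of $\chi$ to some $[-\varepsilon',\varepsilon']$ with $\varepsilon'<\varepsilon$, or note that the argument only ever uses the interior bound; this is shared with the paper's own presentation, so I would not count it against you. In summary: correct modulo the acknowledged quantitative-persistence gap, with a nicer traversal argument; to make the proposal self-contained in the spirit of the paper, replace the structural-stability appeal with the cone-trapping construction above, which is precisely what the paper does.
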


\begin{proof}
Following the arguments in the proof of Lemma \ref{lem:neighbourhoods}, we can find open cones $V_1\subset V_2$ containing $E^\ast_{u0}$ as small as desired, such that $\overline{V_1} \subset V_2$, and for some $t>0$, $\Phi^{0}_t (V_2) \subset V_1$, and $|\Phi^{0}_t (\xi)| > 3 |\xi|$ for all $\xi \in V_2$. 

Since $\| V\|_{\mathcal{C}^1} \leq 1$, for $\lambda$ small enough, we find that $\Phi_t(V_2) \subset V_1'$, where $V_1\subset V_1' \subset V_2$, and $\overline{V_1'} \subset V_2$. We also get that $|\Phi_t (\xi)| > 2 |\xi|$ for all $\xi \in V_2$. Now, we let
\begin{equation*}
E^\ast_{u}(x) := \{ \xi \in V_2(x)\ |\ \Phi_{-t}(\xi)\in V_2 \text{ for all }t\geq 0 \}.
\end{equation*}
Since we can write this as a decreasing intersection of compacts sets (compact in $T^\ast_x M \cup S^\ast_x M$), it is non empty, closed, and it is a cone by linearity of $\Phi_{t}$. We deduce that $E^\ast_{u}$ is a sink for $\Phi_t$

Likewise, we can find similar cones around $E^\ast_{s0}$ for negative times, and obtain that the corresponding $E^\ast_{s}$ is a source. 

For the points that are neither in the source nor in the sink, we can directly use Lemma \ref{lem:neighbourhoods}. Finally, since we could choose the neighbourhood $V_2$ as small as desired, since $E^\ast_{u} \subset V_2$, and since ${\bf m} = +1$ in a neighbourhood of $E^\ast_{u0}$, the proof is complete.
\end{proof}

\section{Perturbation of resonances}

The main step in the proof of \cite{Dyatlov-Zworski-16} is to prove the following (their Proposition 3.4). They use semi-classical spaces
\[
\mathcal{H}_{h,rG} := \Op_h(e^{-rG})L^2(M),
\]
$\Op_h$ being now a semi-classical quantization with small parameter $h>0$. This space is the same as $\mathcal{H}_{rG}$, albeit with a different, equivalent norm depending on $h>0$. We fix $Q$ a positive self-adjoint pseudor which is microsupported and elliptic around the zero section.
\begin{lemma}\label{lemma:control}
Under the assumptions from section \ref{sec:1}, for $0<h\leq h_0$, for $r> r_X(\Re s)$ and $|\Im s| \leq h^{-1/2}$, $hX - Q - s$ is invertible on $\mathcal{H}_{h,rG}$ and
\[
\| (h X - Q - s)^{-1} \| \leq \frac{C}{h}.
\]
\end{lemma}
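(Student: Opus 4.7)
The plan is to conjugate by the weight to reduce the claim to an $L^2$-estimate, and then to run a positive-commutator / energy argument that exploits the two positivity inputs available: the pointwise sign $X_\ast {\bf m}\geq 0$ coming from Lemma \ref{lemma:main}(3), and the threshold condition $r>r_X(\Re s)$ which is precisely tailored to absorb everything else.

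Concretely, let $B:=\Op_h(e^{rG})$; in the variable log-order semiclassical calculus of Faure--Sj\"ostrand this is elliptic and gives an isomorphism $\mathcal{H}_{h,rG}\to L^2(M)$, so the desired bound is equivalent to $\|u\|_{L^2}\leq (C/h)\|P u\|_{L^2}$ for $P:=B(hX-Q-s)B^{-1}$. Symbol calculus gives
\[
P \;=\; hX - Q - s \;-\; i h r\, X_\ast G \;+\; h\cdot R,
\]
with $R$ a remainder which, although of log-order, is bounded on $L^2$ uniformly in $h$. Taking imaginary parts and using $G\sim {\bf m}\log\langle\xi\rangle$, the principal symbol of $(P-P^\ast)/(2i)$ on $T^\ast M$ equals
\[
-\Im s \;-\; \sigma(Q) \;+\; hr\,(X_\ast {\bf m})\log\langle\xi\rangle \;+\; hr\,{\bf m}\,\frac{X_\ast\langle\xi\rangle}{\langle\xi\rangle} \;-\; \tfrac{h}{2}\,\mathrm{div}(X) \;+\; O(h).
\]

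Next I would verify, region by region, that this quantity is $\geq c h$ for some $c>0$ uniform in $\|X-X_0\|_{C^1}<\eta$. Near the zero section the ellipticity of $Q$ provides the positivity directly. Away from the zero section, the first log-term is $\geq 0$ by (3); the remaining non-positive contributions are bounded below by $-\sup|X_\ast\langle\xi\rangle/\langle\xi\rangle|-\tfrac12\|\mathrm{div}\,X\|_{L^\infty}+\Re s$ times $h$, and the definition of $r_X(\Re s)$ recorded in Section \ref{sec:1} is exactly the statement that $r$ dominates all of these with a margin. A sharp G\aa rding inequality, applied in the log-order calculus, then yields
\[
\Im\langle P u,u\rangle \;\geq\; \tfrac{ch}{2}\,\|u\|_{L^2}^2 \;-\; C h^{3/2}\,\|u\|_{L^2}^2,
\]
where the $h^{3/2}$ error absorbs the constraint $|\Im s|\leq h^{-1/2}$. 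Cauchy--Schwarz yields $\|u\|_{L^2}\leq (C/h)\|Pu\|_{L^2}$ for $h\leq h_0$ small enough. Running the same argument for $P^\ast$ (which corresponds to the weight $-rG$; since $r_X$ is insensitive to the sign of ${\bf m}$, the threshold condition is still satisfied) gives surjectivity and hence invertibility with norm $\leq C/h$.

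The main obstacle is bookkeeping in the variable log-order symbol calculus: one must check that the commutator and adjoint remainders produced during conjugation and during sharp G\aa rding are genuinely of size $O(h)$ (not, e.g., of log-order, which would spoil the positivity margin), and that the uniformity constants depend on $X$ only through $\|X-X_0\|_{C^1}$, which in turn uses the uniform lower bound on $C',\beta'$ from Lemma \ref{lemma:main}. Once these are in hand, the estimate on the full resolvent is immediate.
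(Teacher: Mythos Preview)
The paper does not prove this lemma independently: it is quoted as Proposition~3.4 of \cite{Dyatlov-Zworski-16}, and the paper's own contribution is only to check that the hypotheses (1)--(3) of Section~\ref{sec:1} hold, with uniform constants, for every smooth $X$ that is $C^1$-close to $X_0$ (this is Lemma~\ref{lemma:main}). The argument in \cite{Dyatlov-Zworski-16} is \emph{not} a single global positive-commutator estimate; it microlocalises and combines three distinct ingredients: elliptic estimates off the characteristic set $\{p=0\}$, radial-point (threshold) estimates near $E^\ast_u$ and $E^\ast_s$, and propagation of singularities along $\Phi_t$ in between.

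Your global G\aa rding argument has a concrete gap already at the level of the symbol computation. In this paper $Q$ is a \emph{real} self-adjoint absorber (not the complex absorbing potential $-iQ$ used in \cite{Dyatlov-Zworski-16}), so $Q$ contributes to $(P+P^\ast)/2$ and \emph{not} to $(P-P^\ast)/(2i)$; your term $-\sigma(Q)$ is on the wrong side. Likewise the conjugation correction is real: $[\Op_h(e^{rG}),hX]\,\Op_h(e^{-rG})$ has principal symbol $-hr\,X_\ast G$ with no factor of $i$, so it also lands in $\Re P$, not in $\Im P$ as you wrote. Finally, the genuine principal symbol of $\Im(hX)$ is $p=\xi(X)$, which you have omitted entirely. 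Once these are put right one finds $\sigma(\Im P)=p-\Im s$, which changes sign on $T^\ast M$, while $\sigma(-\Re P)=q+\Re s + h\bigl(r\,X_\ast G+\tfrac12\operatorname{div}X\bigr)$, which at fibre infinity (where $q=0$) is governed by an order-one term $\Re s$ that the statement allows to be negative. An $O(h)$ escape-function correction cannot repair an $O(1)$ sign defect, so neither part of $P$ is globally semibounded and a single sharp G\aa rding inequality cannot close the estimate. This is exactly why \cite{Dyatlov-Zworski-16} splits phase space and uses propagation to carry control from the source (where the threshold condition gives a priori regularity) through the elliptic region to the sink; your sketch collapses these genuinely different mechanisms into one step.
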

One can check that the constants $h_0$ and $C$ can be estimated as
\[
\frac{1}{h_0}, C \leqslant \| X \|_{C^{N(r)}}.
\]
where the number of derivatives $N(r)$ only depends on $r$ and the dimension. Now, we consider a smooth family $X_\epsilon$ of vector fields perturbating $X_0$. From Lemma \ref{lemma:main}, we can find $\epsilon_0>0$, and $r(s)$ a non increasing function of $\Re s$, so that $r(s) \geq r_{X_\epsilon}(s)$ for all $\epsilon \in ]-\epsilon_0,\epsilon_0[$. Using Lemma \ref{lemma:control}, we get a uniform control Lemma:
\begin{lemma}
Consider $X_\epsilon$ a smooth family of vector fields perturbating $X_0$. Then, there is $\epsilon_0>0$ such that the following holds. Given any $s_0 >0$, $k$ and $r>r(s_0)+|k|$, there is $h_k>0$ such that for $0<h<h_k$, $\Re s > \Re s_0$, $|\Im s|<h^{-1}$ and $|\epsilon|<\epsilon_0$,
\[
X_\epsilon - h^{-1}Q -s
\]
is invertible on $\mathcal{H}_{h,rG+k\log\langle \xi \rangle}$. The inverse is then bounded as $\mathcal{O}(1)$ independently of $\epsilon$.
\end{lemma}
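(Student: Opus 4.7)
The plan is to reduce the uniform invertibility of $X_\epsilon - h^{-1}Q - s$ to Lemma \ref{lemma:control} applied to each individual $X_\epsilon$, harvesting uniformity in $\epsilon$ from the smoothness of the family. To begin, the operators $X_\epsilon - h^{-1}Q - s$ and $hX_\epsilon - Q - hs$ have the same invertibility on $\mathcal{H}_{h, rG + k\log\langle\xi\rangle}$, with norms of inverses differing only by the factor $h$. Setting $s' := hs$, the hypothesis $|\Im s| < h^{-1}$ becomes $|\Im s'| < 1$, well inside the range $|\Im s'| \leq h^{-1/2}$ allowed by Lemma \ref{lemma:control} as soon as $h \leq 1$.

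Next, I verify the threshold condition $r > r_{X_\epsilon}(\Re s') + |k|$, which is what Lemma \ref{lemma:control} requires on the space carrying the extra $k\log\langle\xi\rangle$ weight, by the remark at the end of Section \ref{sec:1}. Lemma \ref{lemma:main} supplies a common non-increasing majorant $r(\cdot) \geq r_{X_\epsilon}(\cdot)$ valid for all $\epsilon \in ]-\epsilon_0,\epsilon_0[$, and the explicit formula for $r_X$ in Section \ref{sec:1} exhibits $r_X(\Re s)$ as affine in $\Re s$, with slope and intercept controlled by the $\mathcal{C}^1$-quantities from Lemma \ref{lemma:main}. From $r > r(s_0) + |k|$ it follows that, for $h_k$ small enough, $r > r_{X_\epsilon}(h\Re s) + |k|$ for every admissible $\epsilon$ and every $\Re s > \Re s_0$: the rescaling $\Re s \mapsto h\Re s$ pushes the argument toward the origin, where $r_{X_\epsilon}$ is largest but still uniformly bounded by $r(\cdot)$.

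Lemma \ref{lemma:control} then gives invertibility of $hX_\epsilon - Q - s'$ with $\|(hX_\epsilon - Q - s')^{-1}\| \leq C/h$, the constants $h_0, C$ depending only on $\|X_\epsilon\|_{\mathcal{C}^{N(r)}}$. Since $\epsilon \mapsto X_\epsilon$ is $\mathcal{C}^\infty$ and $[-\epsilon_0, \epsilon_0]$ is compact, this $\mathcal{C}^{N(r)}$ norm is uniformly bounded, so that $h_k$ and $C$ can be chosen independently of $\epsilon$; dividing back by $h$ restores the original operator and yields $\|(X_\epsilon - h^{-1}Q - s)^{-1}\| = \mathcal{O}(1)$, uniformly in $\epsilon$ and in the prescribed strip. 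The main — essentially the only — delicate point is the uniform threshold check of the middle paragraph, which I expect to carry out by hand from the explicit affine form of $r_X(\Re s)$ combined with the uniform majorant from Lemma \ref{lemma:main}; everything else is a routine transfer of the single-vector-field estimate via compactness of the parameter interval.
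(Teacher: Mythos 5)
Your proposal follows the same route as the paper, which gives essentially no explicit proof here (the text reads ``Using Lemma~\ref{lemma:control}, we get a uniform control Lemma''), so supplying the rescaling, the verification of the imaginary-part bound, and the compactness argument making $h_0$ and $C$ uniform through $\|X_\epsilon\|_{C^{N(r)}}$ is exactly the intended content, and all of that is correct.

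The one place your sketch does not close is the threshold check. You claim that from $r > r(s_0)+|k|$, ``for $h_k$ small enough, $r > r_{X_\epsilon}(h\Re s)+|k|$'', with the justification that the rescaling pushes the argument toward the origin where $r_{X_\epsilon}$ is largest. But with ``$s_0>0$'' read literally, the direction is wrong: for $\Re s$ near $\Re s_0$, as $h\to 0$ one has $h\Re s\to 0^+$, and since $r(\cdot)$ is \emph{non-increasing}, $r(h\Re s)\to r(0^+)\geq r(s_0)$; nothing in $r>r(s_0)+|k|$ prevents $r\leq r(0)+|k|$, so shrinking $h_k$ only makes the inequality worse, not better. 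In fact ``$s_0>0$'' is almost certainly a typo for $\Re s_0<0$ --- consistent with the later choice ``$s_0$ so that $r(s_0)>r(\lambda_i-\delta)$'', which with $r$ non-increasing forces $\Re s_0<\Re\lambda_i-\delta\leq 0$. Under $\Re s_0\leq 0$ the threshold check is immediate and requires \emph{no} smallness of $h$: for every $0<h<1$ and $\Re s>\Re s_0$ one has $h\Re s>\Re s_0$ (this uses $\Re s_0\leq 0$), so by monotonicity $r_{X_\epsilon}(h\Re s)\leq r_{X_\epsilon}(\Re s_0)\leq r(s_0)<r-|k|$. So either you take the statement at face value, in which case your argument has a genuine gap, or you correct the sign of $s_0$, in which case the argument goes through but by a monotonicity observation pointing in the opposite direction from the intuition you describe. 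I'd recommend stating the sign issue and then giving the one-line monotonicity inequality rather than appealing to small $h$.
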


We observe that
\begin{equation*}
(X-s) (X-h^{-1}Q-s)^{-1} = \mathbb{1} + h^{-1} Q (X-h^{-1}Q-s)^{-1}.
\end{equation*}
Let us denote $D(X, s) = h^{-1} Q(X-h^{-1} Q -s)^{-1}$. Since $Q$ is smoothing, $D(X,s)$ is Trace class. In particular, the resonances of $X$ in $\Omega_{h,s_0}=\{ \Re s > \Re s_0,\ |\Im s|< h^{-1/2}\}$ are the $s$'s such that the Fredholm determinant
\begin{equation*}
F(X,s) := \det(\mathbb{1} + D(X,s)),
\end{equation*}
vanishes. For fixed $X$ $\mathcal{C}^1$-close to $X_0$, $F(X,\cdot)$ is a holomorphic function in $\Omega_{h,s_0}$. Now, we consider a smooth family $X_\epsilon$. We observe that 
\begin{equation*}
\partial_\epsilon F(X_\epsilon, s) = F(X_\epsilon, s) \Tr \left[ (1+D(X_\epsilon,s))^{-1} \partial_\epsilon D (X_\epsilon, s)\right]
\end{equation*}
and
\begin{equation*}
\partial_\epsilon D(X_\epsilon, s) = h^{-1} Q (X_\epsilon-h^{-1}Q-s)^{-1} \partial_\epsilon X_\epsilon (X_\epsilon-h^{-1}Q-s)^{-1}
\end{equation*}
Since in the $\Tr$ in the formula for $\partial_\epsilon F$, the operators are smoothing, this trace does not depend on the Sobolev space with respect to which we are taking the trace. We will denote by $\| \cdot \|_{\Tr}$ the norm on the space $\mathcal{L}^1(\mathcal{H}_{h,rG}, \mathcal{H}_{h,rG})$. Additionally, 
\begin{equation*}
\| F(X_\epsilon, s) (1+D(X_\epsilon,s))^{-1}\| \leq  \exp 2 \| D \|_{\Tr}+ 1,
\end{equation*}
is uniformly bounded (see equation B.5.15 in \cite{Dyatlov-Zworski-book}). From the formula, we deduce that $\partial_\epsilon F(X_\epsilon, s)$ defines a holomorphic function in the $s$ parameter. In particular, to obtain estimates on its derivatives in $s$, it suffices to estimate
\begin{align*}
\| \partial_\epsilon D (X_\epsilon, s) \|_{\Tr} &\leq Ch^{-1} \| Q  (X_\epsilon-h^{-1}Q-s)^{-1} \partial_\epsilon X_\epsilon \|_{\Tr} \\
					&\leq Ch^{-1}\|\partial_\epsilon X_\epsilon\|_{\mathcal{H}_{h,rG}\to \mathcal{H}_{h,rG-\log\langle\xi\rangle}} \| Q \|_{\mathcal{L}^1(\mathcal{H}_{h,rG - \log \langle \xi\rangle},\mathcal{H}_{h,rG})}\\
					&= \mathcal{O}(h^{-2-n}).
\end{align*}
(with a constant $C$ changing at every line). Here, we needed $(X-h^{-1}Q-s)^{-1}$ to be bounded on $\mathcal{H}_{h,rG -\log \langle\xi\rangle}$, so that the computation is only valid for $h< h_1$.

By an induction argument, we obtain that for $h< h_k$,
\begin{equation*}
|\partial_\epsilon^k F(X_\epsilon, s)|\leq C_k h^{-(2+n)k},
\end{equation*}
so that $\epsilon,s\mapsto F(X_\epsilon, s)$ is valued in 
\begin{equation*}
\mathcal{C}^k( ]-\epsilon_0, \epsilon_0[, \mathscr{O}( \Re s > -\Re s_0,\ |\Im s|\leq h^{-1/2})).
\end{equation*}

Since the resonances do not depend on the choice of space, it does not matter if we were using $\mathcal{H}_{rG}$ or $\mathcal{H}_{h,rG + k\log\langle\xi\rangle}$. 

We consider now a finite sequence $\lambda_1$, ..., $\lambda_N$ of resonances of $X_0$ counted with multiplicity. For $\delta>0$ let $\Omega_\delta:=\{ s \ |\ d(s, \{ \lambda_1, \dots,\lambda_N\}) < \delta\}$. Consider some $\delta>0$, and some $s_0$ so that $r(s_0)>r(\lambda_i-\delta)$ for all $i$. Then for $\delta,h>0$ small enough, $\Omega_\delta\subset \Omega_{h,s_0}$, and for some $\delta'>0$, $|F(X_0, s)|>\delta'$ for $s\in \partial\Omega_\delta$. In particular, there is $\epsilon_0'<\epsilon_0$ such that $|F(X_\epsilon,s)| > \delta'/2$ for $d(s, \{ \lambda_1, \dots,\lambda_N\}) = \delta$ and $|\epsilon|<\epsilon_0'$.

By the Rouch\'e theorem, we deduce that the zeroes of $F(X_\epsilon,s)$ in $\Omega_\delta$ can be parametrized by continuous functions, which are $\mathcal{C}^\infty$ when the resonances are simple. This proves Corollary \ref{cor}.

As a remark, let us consider spectral projectors. We come back to classical operators ($h=1$). Retaking the notations of Corollary \ref{cor}, we consider an open set $\Omega$, and $\lambda_1(\epsilon),\dots,\lambda_N(\epsilon)$ continuous functions such that the spectrum of $X_\epsilon$ intersected with $\Omega$ is exactly $\{\lambda_1(\epsilon),\dots,\lambda_N(\epsilon)\}$ counted with multiplicity. Next, we pick a closed curve $\gamma$ contained in $\Omega$, assuming that it does not contain any $\lambda_i(0)$. Then this remains true on some interval $|\epsilon|< \epsilon'$. As a consequence, for each such $\epsilon$, the following operator is well defined
\[
\Pi_\gamma(\epsilon)=\int_\gamma (X_\epsilon-s)^{-1} ds.
\]
By usual arguments, one can show that for $r > r(s_0)+|k|$ where $s_0 = \inf \Re \gamma(t)$, it is a bounded projector in $\mathcal{L}(\mathcal{H}_{rG+k\log\langle \xi\rangle})$. Using the resolvent formula, we get for $|\epsilon|<\epsilon'$,
\[
\partial_\epsilon \Pi_\gamma = \int_\gamma (X_\epsilon-s)^{-1} \partial_\epsilon X_\epsilon (X_\epsilon-s)^{-1} ds,
\]
which is a bounded operator from $\mathcal{H}_{rG}$ to $\mathcal{H}_{rG - \log\langle\xi\rangle}$ and also from $\mathcal{H}_{rG+\log\langle\xi\rangle}$ to $\mathcal{H}_{rG}$. On the other hand, since $\Pi_\gamma$ has finite rank and is a projector, its derivatives also have finite rank. This comes form the relation
\[
\partial_\epsilon \Pi = \Pi \partial_\epsilon \Pi + (\partial_\epsilon \Pi) \Pi.
\]
Since it has finite rank, the range of $\partial_\epsilon\Pi$ is contained in $\mathcal{H}_{rG}$, and it is bounded on that space (provided $r> r_0 + 1$). By induction, we deduce
\begin{lemma}\label{lemma:reg-proj}
Consider a closed curve $\gamma$ such that for $|\epsilon|<\epsilon_0$, no resonance crosses $\gamma$. Then, for $r> r(s_0) + k+1$, $\epsilon\mapsto\Pi_\gamma(\epsilon)$ is a $C^k$ family of bounded operators on $\mathcal{H}_{rG}$.
\end{lemma}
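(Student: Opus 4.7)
The plan is to prove this by induction on $k$, starting from the Cauchy integral representation $\Pi_\gamma(\epsilon) = \tfrac{1}{2\pi i}\int_\gamma (s-X_\epsilon)^{-1} ds$. The base case $k=0$ is already essentially in hand: the uniform resolvent bounds from the preceding discussion ensure that $\Pi_\gamma(\epsilon)$ is a continuous family of bounded operators on each $\mathcal{H}_{rG+j\log\langle\xi\rangle}$ as long as $r+|j|>r(s_0)$.

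For the inductive step I would differentiate the Cauchy formula $k$ times under the integral sign. This produces a finite sum of terms of the form
\[
\int_\gamma (X_\epsilon-s)^{-1}\bigl(\partial_\epsilon^{\alpha_1}X_\epsilon\bigr)(X_\epsilon-s)^{-1}\bigl(\partial_\epsilon^{\alpha_2}X_\epsilon\bigr)\cdots(X_\epsilon-s)^{-1}\,ds,
\]
indexed by compositions $\alpha_1+\cdots+\alpha_m=k$ with each $\alpha_j\geq 1$. Each $\partial_\epsilon^{\alpha_j}X_\epsilon$ is a smooth vector field, hence bounded $\mathcal{H}_{r'G}\to \mathcal{H}_{r'G-\log\langle\xi\rangle}$ uniformly in $\epsilon$, while each resolvent is bounded on $\mathcal{H}_{r'G}$ provided $r'>r(s_0)$. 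Composing these bounds gives the \emph{a priori} estimate that $\partial_\epsilon^k \Pi_\gamma(\epsilon)$ is bounded $\mathcal{H}_{(r+k)G}\to \mathcal{H}_{rG}$ uniformly in $\epsilon$, with continuity in $\epsilon$ inherited from continuity of the resolvent.

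To upgrade this to boundedness on $\mathcal{H}_{rG}$ itself, I would exploit the finite-rank structure of $\Pi_\gamma$. Differentiating $\Pi^2=\Pi$ with Leibniz yields
\[
\partial_\epsilon^k \Pi = \Pi\,\partial_\epsilon^k\Pi + (\partial_\epsilon^k\Pi)\,\Pi + R_k,
\]
with $R_k$ a universal polynomial in the derivatives $\partial_\epsilon^j\Pi$ for $1\leq j\leq k-1$, which the induction hypothesis handles. The key observation for the two boundary terms is that the range of $\Pi(\epsilon)$ is a fixed finite-dimensional space of generalized resonant states, contained in every $\mathcal{H}_{r'G}$ with $r'>r(s_0)$; therefore $\Pi$ extends to a bounded map $\mathcal{H}_{rG}\to \mathcal{H}_{(r+k)G}$, and $(\partial_\epsilon^k\Pi)\,\Pi$ is bounded on $\mathcal{H}_{rG}$ by composition with the \emph{a priori} bound. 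The mirror term $\Pi\,(\partial_\epsilon^k\Pi)$ factors as $\mathcal{H}_{rG}\to \mathcal{H}_{(r-k)G}\to\mathcal{H}_{rG}$ and is bounded as soon as $\Pi$ is itself bounded on the intermediate weaker space $\mathcal{H}_{(r-k)G}$, which is what forces the margin $r>r(s_0)+k+1$ appearing in the statement.

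The main obstacle I expect is not conceptual but rather the careful bookkeeping of log-weight losses: every $\epsilon$-derivative costs one log factor, every use of $(X_\epsilon-s)^{-1}$ requires the weight to stay strictly above $r(s_0)$, and the finite-rank trick trades those losses between domain and codomain. Verifying that the single condition $r>r(s_0)+k+1$ is enough at each step of the induction --- so that in particular the inductive hypothesis provides exactly enough regularity to control $R_k$ without demanding a larger margin --- is the delicate point.
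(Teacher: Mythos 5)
Your proposal follows exactly the paper's argument: the Cauchy integral formula differentiated under the integral gives an \emph{a priori} bound with a loss of $k$ log-weights, and the Leibniz expansion of $\Pi^2=\Pi$ combined with the finite rank (and the fact that the range of $\Pi(\epsilon)$ sits in every $\mathcal{H}_{r'G}$) upgrades this to boundedness on $\mathcal{H}_{rG}$ itself, closed by induction. The only cosmetic slip is writing $\mathcal{H}_{(r\pm k)G}$ where the correct intermediate spaces are $\mathcal{H}_{rG\pm k\log\langle\xi\rangle}$; the structure of the argument is otherwise the same.
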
 

Since the generator is elliptic in the direction of the flow, one could probably refine this statement to show that $\Pi_\gamma$ gains regularity in that direction, however we will not investigate this here.

%\bibliographystyle{alpha}
%\bibliography{../cusps/biblio}

\end{document}